\title{A variable coefficient multi-frequency lemma}
\author[S.~Guo]{Shaoming Guo}
\address{Department of Mathematics, University of Wisconsin-Madison, Madison, WI, USA, and the IAS, Princeton, NJ, USA}
\email{shaomingguo@math.wisc.edu}
\author[P.~Zorin-Kranich]{Pavel Zorin-Kranich}
\address{Department of Mathematics, University of Bonn, Bonn, Germany}
\email{pzorin@uni-bonn.de}
\subjclass[2010]{42B15, 42B20, 42B25}
\def\R{\mathbb{R}}
\def\C{\mathbb{C}}
\def\Z{\mathbb{Z}}
\def\T{\mathbb{T}}
\def\Q{\mathbb{Q}}
\def\wt{\widetilde}
\def\mc{\mathcal}
\def\lesim{\lesssim}
\newcommand{\dif}{\mathrm{d}}
\newcommand{\one}{\mathbf{1}}
\newcommand{\supp}{\operatorname{supp}}
\theoremstyle{plain}
\newtheorem{thm}{Theorem}[section]
\newtheorem{lem}[thm]{Lemma}
\newtheorem{lemma}[thm]{Lemma}
\theoremstyle{remark}
\newtheorem{remark}[thm]{Remark}
\numberwithin{equation}{section}
\DeclarePairedDelimiter\abs{\lvert}{\rvert}
\DeclarePairedDelimiter\norm{\lVert}{\rVert}
\providecommand\given{}
\newcommand\SetSymbol[1][]{%
\nonscript\:#1\vert
\allowbreak
\nonscript\:
\mathopen{}}
\DeclarePairedDelimiterX\Set[1]\{\}{\renewcommand\given{\SetSymbol[\delimsize]}#1}
\DeclarePairedDelimiterXPP\EE[1]{\E}{\lparen}{\rparen}{}{\renewcommand\given{\SetSymbol[\delimsize]}#1} 
\DeclarePairedDelimiterX\inn[2]{\langle}{\rangle}{#1,#2}
\def\calT{\mathcal{T}}
\def\calF{\mathcal{F}}
\def\bfc{\mathbf{c}}
\def\calA{\mathcal{A}}
\def\calC{\mathcal{C}}
\begin{document}

\begin{abstract}
We show a variable coefficient version of Bourgain's multi-frequency lemma.
It can be used to obtain major arc estimates for a discrete Stein--Wainger type operator considered by Krause and Roos.
\end{abstract}
\maketitle


\section{Introduction}
Bourgain's multi-frequency lemma, first introduced in \cite{MR1019960}, allows one to estimate expressions of the type
\begin{equation}
\label{eq:Bourgain-multi-freq}
\norm[\Big]{ \sup_{t} \abs[\Big]{ \calF^{-1} \bigl( \sum_{\beta \in \Xi} S(\beta) \sigma_{t}(\cdot-\beta) \widehat{f} \bigr) } }_{2},
\end{equation}
where $\Xi$ is a $\delta$-separated set of frequencies and $(\sigma_{t})_{t}$ is a family of multipliers supported in a $\delta$-neighborhood of zero.
Expressions like \eqref{eq:Bourgain-multi-freq} arise when singular or averaging operators on $\Z^{n}$ are treated by the circle method.
The coefficients $S(\beta)$ are usually some type of complete exponential sums.

In this note, we address the problem of extending Bourgain's lemma to a setting in which the coefficients $S(\beta)$ in \eqref{eq:Bourgain-multi-freq} also depend on $t$.
This situation recently arose in \cite{arxiv:1803.09431,arXiv:1907.00405}.
Contrary to the classical case \eqref{eq:Bourgain-multi-freq}, the corresponding operator can no longer be easily represented as the composition of two Fourier multipliers.
We defer this application to Section~\ref{sec:application} and begin with the statement of our multi-frequency lemma.

Let $n\geq 1$ and let $\chi_{0}: \R^n\to [0,1]$ be a smooth bump function that is supported on $[-1, 1]^n$ and equals $1$ on $[-1/2, 1/2]^n$.
Let $A$ be a contractive invertible linear map on $\R^{n}$ and denote $\chi(\xi):=\chi_{0}(A^{-1}\xi)$, so that in particular $\chi$ equals $1$ on $U := A([-1/2,1/2]^{n})$.
Then $\phi = \calF_{\Z^n}^{-1} (\chi)$ is an $\ell^{1}$ normalized bump function, in the sense that $\norm{\phi}_{\ell^p(\Z^n)}\sim \abs{U}^{1-1/p}$.
In this article, we write $A \lesssim B$ if $A \leq CB$ with a constant $C$ depending only on the dimension $n$, unless indicated otherwise by a subscript.
We write $A \sim B$ if $A\lesssim B$ and $B \lesssim A$.

For a function $F$ from a totally odered set $\calT$ to a normed vector space $H$, we denote the \emph{$r$-variation seminorm} by
\[
\norm{F(t)}_{V^r_{t\in\calT}(H)}
=
\sup_{t_{0} \leq \dotsb \leq t_{J}} \bigl( \sum_{j=1}^{J} \abs{F(t_{j})-F(t_{j-1})}_{H}^{r} \bigr)^{1/r},
\]
where the supremum is taken over all finite increasing sequences in $\calT$.
The vector space $H$ may be omitted if it equals $\C$.
\begin{thm}\label{thm:var-coeff-multi-freq}
Let $\Xi$ be a finite set.
Let $g_{\beta} : \Z^{n} \to \mathbb{C}$, $\beta\in \Xi$, be functions such that, for every $x\in\Z^{n}$ and every sequence $(c_{\beta})_{\beta\in \Xi}$ of complex numbers, we have
\begin{equation}
\label{eq:3zz}
\norm[\big]{ \sum_{\beta\in\Xi} \phi(y) g_{\beta}(x+y) c_{\beta} }_{\ell^{2}_{y}}
\leq
A_{1} \abs{U}^{1/2} \norm{ c_{\beta }}_{\ell^{2}_{\beta}},
\end{equation}
for some $A_1>0$.
Let $\calT \subseteq \R$ be a finite set and let $(T_{t})_{t\in\calT}$ be a family of translation invariant operators on $\ell^{2}(\Z^{n})$ such that, for some $C_1<\infty$, some $\eta>0$ and every $r \in (2,3)$, we have
\begin{equation}
\label{eq:Vr-sigma-Znzz}
\norm{ \norm{ T_{t} f }_{V^{r}_{t\in\calT}} }_{\ell^2(\Z^n)}
\le C_1
(r-2)^{-\eta} \norm{f}_{\ell^2(\Z^n)}.
\end{equation}
Let $\Set{f_{\beta}}_{\beta\in \Xi} \subset \ell^{2}(\Z^{n})$ be functions with $\supp \hat f_{\beta} \subset U$ for every $\beta$.
Then, for any $q\in (2,\infty)$, we have
\begin{equation}
\label{eq:multi-freq-conclusion}
\norm{ \norm{ \sum_{\beta\in \Xi} g_{\beta}(x) (T_{t} f_{\beta})(x) }_{V^{q}_{t\in\calT}} }_{\ell^{2}_{x}(\Z^{n})}
\lesssim
\bigl( \frac{q(\log \abs{\Xi}+1)}{q-2} \bigr)^{\eta+1}
A_{1}
\norm{ \norm{ f_{\beta}}_{\ell^{2}(\Z^{n})} }_{\ell^{2}_{\beta\in \Xi}}.
\end{equation}
\end{thm}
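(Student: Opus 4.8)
The plan is to combine three ingredients: a single‑frequency variational estimate, a single‑scale orthogonality estimate, and a Rademacher--Menshov / chaining argument in the spirit of Bourgain's multi‑frequency lemma, run in $r$‑variation and arranged so that the logarithmic loss falls on $\card{\Xi}$ rather than on $\card{\calT}$.

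\emph{Preliminary reductions and the two ``local'' estimates.} First I would replace each $T_{t}$ by $T_{t}-T_{t_{0}}$, $t_{0}=\min\calT$, which changes neither side of \eqref{eq:multi-freq-conclusion} nor the hypothesis \eqref{eq:Vr-sigma-Znzz}; thereafter $\norm{T_{t}f}_{\ell^{2}}\le C_{1}(r-2)^{-\eta}\norm{f}_{\ell^{2}}$ for all $t$ and $r\in(2,3)$. Since $\norm{\cdot}_{V^{q}}\le\norm{\cdot}_{V^{r}}$ for $q\ge r$, it suffices to prove the conclusion with $V^{q}$ replaced by $V^{r}$, for each $r\in(2,3)$, with constant $\lesssim C_{1}A_{1}(r-2)^{-\eta}\min(\log\card{\Xi}+1,(r-2)^{-1})$: taking $r=q$ when $q<3$ and an appropriate $r\in(2,3)$ close to $3$ when $q\ge3$ turns this into a constant $\lesssim C_{1}A_{1}\bigl(\tfrac{q(\log\card{\Xi}+1)}{q-2}\bigr)^{\eta+1}$. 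Next I would derive two consequences of \eqref{eq:3zz}. \textbf{(a)} [single frequency] For every $\beta$ and $r\in(2,3)$,
\[
\norm[\big]{\abs{g_{\beta}(\cdot)}\,\norm{(T_{t}f_{\beta})(\cdot)}_{V^{r}_{t\in\calT}}}_{\ell^{2}_{x}(\Z^{n})}\lesssim C_{1}A_{1}(r-2)^{-\eta}\norm{f_{\beta}}_{\ell^{2}}.
\]
Indeed $(T_{t}f_{\beta})(\cdot)=\phi\ast(T_{t}f_{\beta})(\cdot)$ for every $t$ (as $\widehat{\phi}=\chi\equiv1$ on $U$), so $x\mapsto\norm{(T_{t}f_{\beta})(x)}_{V^{r}_{t}}$ is dominated by its convolution with $\abs{\phi}$, and Cauchy--Schwarz lets one replace the pointwise weight $\abs{g_{\beta}(x)}^{2}$, up to rapidly summable tails, by the local average $\abs{U}^{-1}\sum_{y}\abs{\phi(y)}^{2}\abs{g_{\beta}(x+y)}^{2}$, which \eqref{eq:3zz} with $\card{\Xi}=1$ bounds by $\lesssim A_{1}^{2}$; \eqref{eq:Vr-sigma-Znzz} supplies the remaining factor. \textbf{(b)} [single scale] If $\widehat{h_{\beta}}\subset U$ for all $\beta$ then $\norm{\sum_{\beta}g_{\beta}h_{\beta}}_{\ell^{2}}\lesssim A_{1}\norm{\norm{h_{\beta}}_{\ell^{2}}}_{\ell^{2}_{\beta}}$; writing $h_{\beta}=\phi\ast h_{\beta}$ and applying Cauchy--Schwarz in the convolution variable reduces this to a bound for $\sum_{w}\sum_{x}\abs{\phi(x-w)}\,\abs[\big]{\sum_{\beta}g_{\beta}(x)h_{\beta}(w)}^{2}$, and the inner $x$‑sum is, again modulo tails, an instance of \eqref{eq:3zz} with coefficient sequence $c_{\beta}=h_{\beta}(w)$. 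Applying (b) to $h_{\beta}=(T_{t}-T_{s})f_{\beta}$ controls each increment of $\sum_{\beta}g_{\beta}(T_{\cdot}f_{\beta})$ in $\ell^{2}_{x}$ by $\lesssim A_{1}\bigl(\sum_{\beta}\norm{(T_{t}-T_{s})f_{\beta}}_{\ell^{2}}^{2}\bigr)^{1/2}$.

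\emph{The multi‑frequency chaining} --- the heart of the matter, and the main obstacle. I would fix an enumeration of $\Xi$ and run a chaining argument over a dyadic splitting of $\Xi$ into $O(\log\card{\Xi})$ stages of consecutive blocks: a variational Rademacher--Menshov inequality bounds the $V^{r}$‑norm of the full sum by a sum, over these $O(\log\card{\Xi})$ stages, of the $\ell^{2}_{x}$‑aggregates, over the blocks of each stage, of the $V^{r}$‑norms of the block‑sums. For each stage the block‑sums are estimated by combining the orthogonality estimate (b) --- which governs their $\ell^{2}_{x}$‑increments --- with \eqref{eq:Vr-sigma-Znzz} invoked \emph{as a black box} for the individual $T_{t}$, so that no logarithm in $\card{\calT}$ is produced. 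Each stage then costs $\lesssim C_{1}A_{1}(r-2)^{-\eta}\norm{\norm{f_{\beta}}_{\ell^{2}}}_{\ell^{2}_{\beta}}$, with a contribution geometric in the stage index (ratio $\lesssim2^{-c(r-2)}$) coming from $r>2$, and summing the stages gives the factor $(r-2)^{-\eta}\min(\log\card{\Xi}+1,(r-2)^{-1})$ claimed above.

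The difficulty, relative to the classical lemma in which the pieces $e^{2\pi i\beta x}\sigma_{t}(\mathrm{D})f_{\beta}$ are genuinely orthogonal in $\ell^{2}$, is that the arbitrary coefficients $g_{\beta}$ destroy exact orthogonality, so one must carry out the entire chaining --- including the off‑diagonal terms that appear when one block of frequencies is paired against another --- using only the quantitative, approximate orthogonality furnished by \eqref{eq:3zz}. Two points need care: (i) checking that the block‑sums of the $T_{t}f_{\beta}$ are still amenable to \eqref{eq:3zz}, for which one uses the reproducing identity and the ``locally constant at scale $\abs{U}^{-1/n}$'' heuristic made rigorous in (a)--(b); and (ii) propagating the $(r-2)^{-\eta}$ behaviour of \eqref{eq:Vr-sigma-Znzz} through the Rademacher--Menshov summation without degrading the exponent, so that the final power is exactly $\eta+1$.
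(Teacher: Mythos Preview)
Your chaining step has a genuine gap. The Rademacher--Menshov inequality (and its variational version) bounds $V^{r}_{N}$ of \emph{partial sums} $\sum_{j\le N}a_{j}$ by $\ell^{2}$-aggregates of dyadic block sums; it is applicable only when the variation is taken in the partial-sum index. Here the sum $\sum_{\beta\in\Xi}$ is complete and the $V^{r}$ is in the unrelated parameter $t$, so a dyadic splitting of $\Xi$ carries no Rademacher--Menshov structure to exploit. Correspondingly there is no mechanism producing the ``geometric decay in the stage index with ratio $\lesssim 2^{-c(r-2)}$'' that you claim: your single-scale estimate (b), applied to a block of any size, yields the same bound $A_{1}\bigl(\sum_{\beta}\norm{(T_{t}-T_{s})f_{\beta}}_{\ell^{2}}^{2}\bigr)^{1/2}$, so the stages do not shrink. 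More fundamentally, (b) only controls the $\ell^{2}_{x}$ norm of a \emph{single} increment, whereas $V^{r}_{t}$ involves an $x$-dependent optimal partition; once you push $\ell^{2}_{x}$ inside via Minkowski you obtain only $V^{r}_{t}(\ell^{2}_{x})$, and with the partition frozen independently of $x$ you cannot get back to the pointwise $V^{r}$. Your preliminary reduction is also off: replacing $V^{q}$ by $V^{r}$ with $r<q$ and then using monotonicity would require a bound free of any $\abs{\Xi}$-power, which is too strong to expect.

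The paper proceeds quite differently. After the same reproducing identity $f_{\beta}=f_{\beta}\ast(\phi\tilde\phi)$ and Cauchy--Schwarz in the convolution variable (essentially your argument for (b), but carried out \emph{inside} the $V^{q}_{t}$ norm rather than increment-by-increment), the problem becomes, for each fixed $x$, to bound
\[
\norm[\Big]{\norm[\big]{\sum_{\beta}\phi(y)g_{\beta}(x+y)\,c_{t,\beta}}_{V^{q}_{t}}}_{\ell^{2}_{y}},\qquad c_{t,\beta}=(T_{t}f_{\beta})(x),
\]
i.e.\ a pointwise-in-$x$ question on the measure space $\Z^{n}_{y}$ where hypothesis \eqref{eq:3zz} applies directly with $A_{0}=A_{1}\abs{U}^{1/2}$. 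The crucial lemma (the paper's Lemma~\ref{lem:pull-Vr-into-L2}) then converts this $L^{2}_{y}(V^{q}_{t})$ norm of the scalar sum into the $V^{r}_{t}(\ell^{2}_{\beta})$ norm of the coefficient vector, at the cost of a factor $\abs{\Xi}^{(\frac12-\frac1r)\frac{q}{q-2}}$; its proof is not Rademacher--Menshov but a greedy jump-counting interpolation (Lemma~\ref{lem:NOT:3.2}), using $M\le A_{0}$ and $\norm{g}_{L^{2}(B')}\le A_{0}\abs{\Xi}^{1/2}$ together with $J_{\lambda}\le\norm{c}_{V^{r}}^{r}/\lambda^{r}$. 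One then applies the vector-valued consequence of \eqref{eq:Vr-sigma-Znzz} (via Minkowski) to control $\norm{c_{t,\beta}}_{\ell^{2}_{x}V^{r}_{t}(\ell^{2}_{\beta})}$, and finally chooses $r-2=(q-2)/(\log\abs{\Xi}+1)$ so that the $\abs{\Xi}$-power is $O(1)$ and the prefactors combine to $\bigl(\tfrac{q(\log\abs{\Xi}+1)}{q-2}\bigr)^{\eta+1}$. The jump-counting step is the missing idea in your outline.
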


\begin{remark}
The classical multi-frequency lemma corresponds to the case $g_{\beta}(x) = e^{2\pi i x \cdot \xi_{\beta}}$ with some $U$-separated frequencies $\xi_{\beta}$.
In this case, \eqref{eq:3zz} holds with $A_{1} \sim 1$.
\end{remark}

\begin{remark}
In \eqref{eq:multi-freq-conclusion}, one can use a different family $(T_{t,\beta})_{t\in\calT}$ for each $\beta\in\Xi$, as long as the bound \eqref{eq:Vr-sigma-Znzz} is uniform in $\Xi$.
\end{remark}

\section{Proof of the multi-frequency estimate}
\label{sec:proof}
The proof of Theorem~\ref{thm:var-coeff-multi-freq} is based on the arguments introduced in \cite{MR1019960} and further developed in \cite{MR2653686,MR3090139,arXiv:1402.1803,MR3280058}.
The first point where we deviate from the previous arguments is the following result, which extends \cite[Proposition 9.3]{MR3090139} and \cite[Lemma 3.2]{MR2653686}.

\begin{lemma}
\label{lem:pull-Vr-into-L2}
Let $\Xi$ be a finite set and $g_{\beta}$, $\beta\in\Xi$, complex-valued measurable functions on some measure space $Y$.
Assume that, for some $A_0<\infty$, we have
\begin{equation}
\label{eq:alm-orth-hypothesis}
\norm[\big]{ \sum_{\beta \in \Xi} g_{\beta}(y) c_{\beta} }_{L^{2}_{y}(Y)}
\leq
A_{0} \norm{c_{\beta}}_{\ell^{2}_{\beta}},
\end{equation}
for every sequence $(c_{\beta})_{\beta\in \Xi} \in \ell^{2}(\Xi)$.
Then, for every $2<r<q$, every countable totally ordered set $\calT$, and every collection of sequences $\Set{(c_{t,\beta})_{\beta\in\Xi}}_{t\in \calT} \subset \ell^{2}(\Xi)$, we have
\[
\norm[\big]{ \norm[\big]{ \sum_{\beta\in \Xi} g_{\beta}(y) c_{t,\beta} }_{V^{q}_{t}} }_{L^{2}_{y}(Y)}
\lesssim
\left( \frac{q}{q-r} + \frac{2}{r-2} \right)
A_{0} \abs{\Xi}^{\frac12 (\frac1{2} - \frac1r)/(\frac1{2} - \frac1q)}
\norm{ c_{t,\beta}}_{V^{r}_{t}(\ell^{2}_{\beta})},
\]
where the implicit constant is absolute.
\end{lemma}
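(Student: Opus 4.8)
The plan is to reduce to finite $\calT$ (the $V^q$ and $V^r$ seminorms are suprema over finite increasing sequences, so this costs nothing), normalise $\norm{c_{t,\beta}}_{V^r_t(\ell^2_\beta)}=1$, and abbreviate $\Phi(c)(y):=\sum_{\beta\in\Xi}g_\beta(y)c_\beta$, so that \eqref{eq:alm-orth-hypothesis} says exactly that $\Phi\colon\ell^2(\Xi)\to L^2(Y)$ has norm at most $A_0$; write also $\Gamma(y):=\bigl(\sum_\beta\abs{g_\beta(y)}^2\bigr)^{1/2}$ and note $\norm{\Gamma}_{L^2(Y)}=\bigl(\sum_\beta\norm{g_\beta}_{L^2(Y)}^2\bigr)^{1/2}\le A_0\abs{\Xi}^{1/2}$. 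The central device is a telescoping decomposition of the $\ell^2(\Xi)$-valued sequence $t\mapsto c_t$ into pieces adapted to the dyadic sizes of its increments. For each $k\in\Z$ the number of pairwise disjoint time-intervals on which $c$ oscillates by at least $2^k$ in $\ell^2_\beta$ is at most $2^{-kr}$; truncating $c$ at that resolution by a greedy stopping-time construction produces a step function $c^{(k)}_t$ with $\norm{c_t-c^{(k)}_t}_{\ell^2_\beta}\lesssim 2^k$ uniformly in $t$, with $\norm{c^{(k)}}_{V^r_t(\ell^2_\beta)}\lesssim 1$, and equal to $c_{\min\calT}$ once $2^k\gtrsim1$. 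Setting $\Delta_k:=c^{(k-1)}-c^{(k)}$ one gets $c_t-c_{\min\calT}=\sum_{k\lesssim0}\Delta_k(t)$, where each $\Delta_k$ is an $\ell^2(\Xi)$-valued step function with $\lesssim2^{-kr}$ jumps, each of $\ell^2_\beta$-size $\lesssim2^k$, with $\norm{\Delta_k(t)}_{\ell^2_\beta}\lesssim2^k$ uniformly in $t$, and — interpolating the bound on the number of jumps against the $\ell^r$-bound on their sizes — with $\norm{\Delta_k}_{V^2_t(\ell^2_\beta)}\lesssim2^{k(1-r/2)}$. Since $\Phi(c_{\min\calT})$ is $t$-independent and $V^q_t$ is subadditive, it then suffices to bound $\sum_{k\lesssim0}T_k$ with $T_k:=\norm[\big]{\norm{\Phi(\Delta_k(t))}_{V^q_t}}_{L^2_y}$.

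Each $T_k$ I would estimate in two complementary ways. In the ``fine'' regime I use the pointwise bound $\abs{\Phi(\Delta_k(t))(y)}\le\Gamma(y)\norm{\Delta_k(t)}_{\ell^2_\beta}$: then $t\mapsto\Phi(\Delta_k(t))(y)$ is a scalar step function with $\lesssim2^{-kr}$ jumps of size $\lesssim\Gamma(y)2^k$ and, because $\Phi(\Delta_k(\min\calT))(y)=0$, with supremum $\lesssim\Gamma(y)2^k$; the elementary fact that such a step function has $V^q$-norm $\lesssim_q(\#\{\text{jumps}\}\cdot(\text{jump size})\cdot(\sup)^{q-1})^{1/q}$ gives $\norm{\Phi(\Delta_k(t))}_{V^q_t}\lesssim_q2^{-kr/q}\Gamma(y)2^k$, whence $T_k\lesssim_qA_0\abs{\Xi}^{1/2}2^{k(1-r/q)}$ — summable as $k\to-\infty$ precisely because $r<q$, with geometric ratio $2^{1-r/q}$ contributing a factor $\lesssim\frac{q}{q-r}$. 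In the ``coarse'' regime I instead use \eqref{eq:alm-orth-hypothesis} in its strong, non-pointwise form, i.e. I invoke the endpoint estimate $\norm[\big]{\norm{\Phi(d_t)}_{V^q_t}}_{L^2_y}\lesssim A_0\norm{d_t}_{V^2_t(\ell^2_\beta)}$ ($q>2$, absolute constant) — this is, up to the present generality, the classical multi-frequency lemma of \cite{MR1019960} in the form of \cite[Prop.~9.3]{MR3090139}, \cite[Lem.~3.2]{MR2653686}. Applied to $d=\Delta_k$ this yields $T_k\lesssim A_0 2^{k(1-r/2)}$, summable as $k\to+\infty$ with ratio $2^{1-r/2}$ contributing a factor $\lesssim\frac{2}{r-2}$.

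Finally I split $\sum_{k\lesssim0}T_k$ at the scale $k_1$ determined by $\abs{\Xi}^{1/2}2^{k_1(1-r/q)}=2^{k_1(1-r/2)}$, using the fine bound for $k<k_1$ and the coarse bound for $k\ge k_1$; a short computation gives $2^{k_1(1-r/2)}=\abs{\Xi}^{\theta/2}$ with $\theta=(\tfrac12-\tfrac1r)/(\tfrac12-\tfrac1q)$, so the two geometric series sum to $\lesssim\bigl(\frac{q}{q-r}+\frac{2}{r-2}\bigr)A_0\abs{\Xi}^{\theta/2}$, which after undoing the normalisation is the asserted bound.

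The step I expect to be the main obstacle is the coarse estimate, i.e. the endpoint $r=2$. Here the seminorm $\norm{\cdot}_{V^q_t}$ hides a supremum over all increasing sequences in $\calT$, and one cannot afford to enumerate these (that would cost a power of their number) while having only the single scalar estimate \eqref{eq:alm-orth-hypothesis} at one's disposal. The way around this — the heart of Bourgain's argument — is to replace $\norm{F}_{V^q_t}$ by the jump-counting quantity $\bigl(\sum_k2^{kq}N_{2^k}(F)\bigr)^{1/q}$ (legitimate for $q>2$) and to control $N_\lambda$ through a greedy stopping-time decomposition of $t\mapsto\Phi(c_t)(y)$: the point is that the increments that arise are increments of $c$ over pairwise disjoint time-intervals, which are controlled by $\norm{c}_{V^2_t(\ell^2_\beta)}$, and feeding this back through \eqref{eq:alm-orth-hypothesis} should close the estimate. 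A secondary, purely bookkeeping, difficulty is to arrange the $q$- and $r$-dependent constants so that they combine to exactly $\frac{q}{q-r}+\frac{2}{r-2}$ and no worse.
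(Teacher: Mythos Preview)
Your approach is correct and reaches the same bound, but it is a more hands-on repackaging of the argument the paper uses. The paper applies Lemma~\ref{lem:NOT:3.2} (the abstract jump-counting inequality from \cite{MR3280058}) once to the undecomposed sequence $(c_{t,\beta})$, obtaining
\[
\lesssim A_{0}\int_{0}^{\infty}\min\bigl(J_{\lambda}^{1/2},\,\abs{\Xi}^{1/2}J_{\lambda}^{1/q}\bigr)\,\dif\lambda,
\]
and then just plugs in $J_{\lambda}\le(a/\lambda)^{r}$ and splits the integral at a single point. Your dyadic decomposition $c=\sum_{k}\Delta_{k}$ together with the fine/coarse dichotomy is precisely the discrete analogue of that $\min$ inside the integral: your ``fine'' bound corresponds to the term $\norm{g}J_{\lambda}^{1/q}$ (Cauchy--Schwarz against $\Gamma$), and your ``coarse'' bound to the term $M\,J_{\lambda}^{1/2}$. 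So the two proofs are the same mechanism viewed continuously versus dyadically; the paper's route is shorter because Lemma~\ref{lem:NOT:3.2} already contains both regimes.

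The one point to watch is your coarse estimate $\norm{\norm{\Phi(d_{t})}_{V^{q}_{t}}}_{L^{2}}\lesssim A_{0}\norm{d}_{V^{2}_{t}(\ell^{2}_{\beta})}$ with an \emph{absolute} constant. As stated for arbitrary $d$ this is the $r=2$ endpoint of the very lemma you are proving, and the references you cite (\cite[Prop.~9.3]{MR3090139}, \cite[Lem.~3.2]{MR2653686}) are formulated for characters $g_{\beta}=e(\cdot\,\xi_{\beta})$, so you would still have to check that their proofs go through under the bare hypothesis \eqref{eq:alm-orth-hypothesis} (they do, but this is work). A cleaner fix within your scheme: you do not need the general endpoint, only the case $d=\Delta_{k}$, which has at most $\lesssim 2^{-kr}$ jumps and $\ell^{2}_{\beta}$-diameter $\lesssim 2^{k}$; for such a sequence $J_{\lambda}\lesssim 2^{-kr}$ for $\lambda\lesssim 2^{k}$ and vanishes thereafter, so Lemma~\ref{lem:NOT:3.2} with only the $M\,J_{\lambda}^{1/2}$ term already gives $T_{k}\lesssim A_{0}\int_{0}^{C2^{k}}2^{-kr/2}\,\dif\lambda\sim A_{0}2^{k(1-r/2)}$ with an absolute constant, and your summation then closes exactly as written.
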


The proof of Lemma~\ref{lem:pull-Vr-into-L2} relies on the following result.

\begin{lemma}[{\cite[Lemma 2.6]{MR3280058}}]
\label{lem:NOT:3.2}
Let $B$ be a normed space with norm $\norm{\cdot}_B$ and $B'$ the dual space of $B$. Let $Y$ be a measure space, and $g\in L^{p}(Y,B')$, $p\geq 1$.
Let also $\bfc = (c_{t})_{t\in\calT} \subset B$ with a countable totally ordered set $\calT$, and $q>p$.
Then
\[
\norm{ \norm{ \inn{c_{t}}{g(y)} }_{V^{q}_{t}} }_{L^{p}_{y}(Y)}
\lesssim
\int_{0}^{\infty} \min( M (J_{\lambda}(\bfc))^{\frac 1 p}, \norm{g}_{L^{p}(Y,B')} (J_{\lambda}(\bfc))^{\frac 1 q}) \dif\lambda,
\]
where $J_{\lambda}(\bfc)$ is the greedy jump counting function for the sequence $\bfc:=(c_{t})_{t\in \calT}$ at the scale $\lambda$,
\begin{equation}
\label{eq:1}
M = \sup_{c\in B, \norm{c}_B=1} \norm{ \inn{c}{g(y)} }_{L^{p}_{y}(Y)},
\end{equation}
and the implicit constant is absolute.
\end{lemma}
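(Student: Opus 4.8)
The plan is to pass to finite $\calT$ and then split the $B$-valued sequence $\bfc$ into dyadic scales, bounding the contribution of each scale by \emph{both} quantities appearing in the minimum on the right-hand side and taking the minimum scale by scale. Since the $V^q$ seminorm is a supremum over finite increasing sequences and $J_\lambda$ is monotone under restriction to a subsequence, a routine application of the monotone convergence theorem reduces the estimate to the case where $\calT$ is finite, which we assume henceforth.

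\textbf{Multi-resolution decomposition.} For $k\in\Z$ let $t_0^k<t_1^k<\dotsb<t_{J_k}^k$, with $J_k:=J_{2^k}(\bfc)$, be the greedy stopping times of $\bfc$ at scale $2^k$ (so $t_0^k=\min\calT$ and $t_{i+1}^k$ is the first $t>t_i^k$ with $\norm{c_t-c_{t_i^k}}_B>2^k$), and define the coarsening $\bfc^{(k)}$ by $c_t^{(k)}:=c_{t_i^k}$ for $t\in[t_i^k,t_{i+1}^k)$. By construction $\norm{c_t-c_t^{(k)}}_B\le 2^k$ for all $t$, the sequence $\bfc^{(k)}$ is piecewise constant with at most $J_k+1$ pieces, it equals $\bfc$ once $2^k$ is below the least nonzero value of $\norm{c_t-c_s}_B$, and it is constant once $2^k$ exceeds the diameter of $\bfc$. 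Hence $\bfc=(\text{constant})+\sum_k d^{(k)}$ is a \emph{finite} telescoping sum with $d^{(k)}:=\bfc^{(k)}-\bfc^{(k+1)}$; the constant term is killed by the variation seminorm, so
\[
\norm{\inn{c_t}{g(y)}}_{V^q_t}\le\sum_k\norm{\inn{d_t^{(k)}}{g(y)}}_{V^q_t}.
\]
Each $d^{(k)}$ is piecewise constant with at most $J_k+J_{k+1}+1\le 2J_k+1$ pieces (using that $\lambda\mapsto J_\lambda(\bfc)$ is non-increasing) and satisfies $\norm{d_t^{(k)}}_B\le 3\cdot 2^k$; moreover $d^{(k)}\equiv 0$ whenever $J_k=0$.

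\textbf{Single-scale estimate.} The heart of the matter is the claim that if $v_1,\dotsc,v_N\in B$ with $\norm{v_j}_B\le H$ for all $j$, then
\[
\norm[\big]{\norm{(\inn{v_j}{g(y)})_{j=1}^N}_{V^q_j}}_{L^p_y(Y)}\lesssim H\min\bigl(MN^{1/p},\ \norm{g}_{L^p(Y,B')}N^{1/q}\bigr),
\]
with absolute implied constant. For the second bound one uses that a finite sequence of length $N$ has at most $N-1$ jumps, each of size at most $2\max_j\abs{\inn{v_j}{g(y)}}\le 2H\norm{g(y)}_{B'}$, giving $\norm{(\inn{v_j}{g(y)})_j}_{V^q_j}\le 2N^{1/q}H\norm{g(y)}_{B'}$, and then takes $L^p_y$-norms. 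For the first bound one uses that, since $q\ge p\ge 1$, $\norm{\cdot}_{V^q}\le\norm{\cdot}_{V^p}\le 2\norm{\cdot}_{\ell^p}$ (the last step because each term of a finite sequence occurs in at most two of the jump differences of the optimizing partition for $V^p$), whence $\norm{(\inn{v_j}{g(y)})_j}_{V^q_j}\le 2\bigl(\sum_j\abs{\inn{v_j}{g(y)}}^p\bigr)^{1/p}$; taking $p$-th powers, integrating in $y$, and invoking \eqref{eq:1} gives $\sum_j\norm{\inn{v_j}{g(\cdot)}}_{L^p_y}^p\le\sum_j(M\norm{v_j}_B)^p\le N(MH)^p$.

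\textbf{Assembly and main obstacle.} Identifying partitions of $\calT$ subordinate to $d^{(k)}$ with subsequences of its piecewise values, the single-scale estimate applies to $d^{(k)}$ with $N\lesssim J_k$ and $H\sim 2^k$; summing over $k$ then yields
\[
\norm[\big]{\norm{\inn{c_t}{g(y)}}_{V^q_t}}_{L^p_y(Y)}\lesssim\sum_k 2^k\min\bigl(MJ_k^{1/p},\ \norm{g}_{L^p(Y,B')}J_k^{1/q}\bigr)\lesssim\int_0^\infty\min\bigl(MJ_\lambda(\bfc)^{1/p},\ \norm{g}_{L^p(Y,B')}J_\lambda(\bfc)^{1/q}\bigr)\dif\lambda,
\]
the last comparison being the standard passage from a dyadic sum to an integral, legitimate because $\lambda\mapsto J_\lambda(\bfc)$ is non-increasing; all constants are absolute. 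The delicate point is obtaining the two bounds of the single-scale estimate \emph{with the correct exponents simultaneously}: estimating $V^q$ directly through its jump function produces the exponent $1/p+1/q$ in the first bound, so that bound has to be routed through $V^p$ and $\ell^p$ instead; and since $\int_0^\infty\min(\cdot,\cdot)\dif\lambda$ is genuinely smaller than the minimum of the two integrals, the $N^{1/q}$ refinement cannot be inserted after a single global use of the two bounds — this is exactly what forces the scale-by-scale decomposition.
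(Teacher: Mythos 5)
The paper itself gives no proof of this statement; it is imported verbatim from \cite[Lemma 2.6]{MR3280058}, so there is no in-paper argument to compare with. Your strategy is the natural (and, as far as I can tell, the standard) one: greedy coarsenings $\bfc^{(k)}$ at the dyadic scales $2^{k}$, a finite telescoping $\bfc=\text{const}+\sum_{k}d^{(k)}$, and for each scale the two bounds, one through $\norm{\cdot}_{V^{q}}\le\norm{\cdot}_{V^{p}}\le 2\norm{\cdot}_{\ell^{p}}$ together with the definition of $M$, the other through the trivial jump-count bound times $\norm{g(y)}_{B'}$; that single-scale estimate and its assembly are correct as written.

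The genuine gap is that both monotonicity properties you ascribe to the \emph{greedy} jump counting function are false, and you use them three times (for $J_{k+1}\le J_{k}$, for the dyadic-sum-to-integral step, and for the reduction to finite $\calT$). Counterexample: for the scalar sequence $0,\,1.05,\,1.2,\,0.05,\,1.2,\,0.05,\dots$ the greedy selection at scale $\lambda=1$ stops at $1.05$ and never moves again, so $J_{1}=1$, whereas at scale $\lambda=1.1$ (or after deleting the single entry $1.05$) it oscillates between $1.2$ and $0.05$, so the count is comparable to the length of the sequence; thus $\lambda\mapsto J_{\lambda}(\bfc)$ is not non-increasing, and $J_{\lambda}$ can increase under restriction to a subsequence. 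What is true, and what your argument actually needs, is a doubling comparison: if $\mu\ge 2\lambda$, an induction on the index (using that within a greedy block at scale $\lambda$ all points lie within $\lambda$ of the base point) shows that the $j$-th greedy time at scale $\mu$ exists only if the $j$-th one at scale $\lambda$ does and is no earlier, whence $J_{\mu}\le J_{\lambda}$. This gives $J_{k+1}\le J_{k}$, and it rescues the passage from $\sum_{k}2^{k}\min\bigl(MJ_{2^{k}}^{1/p},\norm{g}_{L^{p}(Y,B')}J_{2^{k}}^{1/q}\bigr)$ to the integral by integrating $\lambda$ over $[2^{k-2},2^{k-1}]$, where $J_{\lambda}\ge J_{2^{k}}$. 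For the reduction to finite $\calT$ you cannot invoke restriction-monotonicity of $J_{\lambda}$; instead note that the greedy times of a restriction form a chain of $\lambda$-jumps of the full sequence, so their number is at most the supremum jump count $N_{\lambda}(\bfc)$, and $N_{\lambda}(\bfc)\le J_{\lambda/2}(\bfc)$ because consecutive points of any $\lambda$-jump chain must lie in distinct greedy blocks at scale $\lambda/2$; after the change of variable $\lambda\mapsto\lambda/2$ this costs only a factor $2$ in the integral. (Some such argument is unavoidable anyway, since for a general countable totally ordered $\calT$ the ``first $t$ with...'' in the greedy selection need not exist, so the finite-subset approximation is where $J_{\lambda}(\bfc)$ has to be given meaning.) With these repairs your proof is complete; as written, the three steps resting on the false monotonicity claims are unjustified.
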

We will not need the definition of $J_{\lambda}(\bfc)$, only the fact that
\begin{equation}
\label{eq:jump-counting-fact}
J_{\lambda}(\bfc) \leq \norm{c_{t}}_{V^{r}_{t}(B)}^{r}/\lambda^{r}.
\end{equation}
\begin{proof}[Proof of Lemma~\ref{lem:pull-Vr-into-L2}]
We apply Lemma~\ref{lem:NOT:3.2} with $B=B'=\ell^{2}(\Xi)$, and $g(y)=(g_{\beta}(y))_{\beta\in\Xi}$.
By the hypothesis \eqref{eq:alm-orth-hypothesis}, we have $M \le A_{0}$, where $M$ was defined in \eqref{eq:1}.
Moreover,
\[
\norm{ g }_{L^{2}(Y, B')}
=
\bigl( \sum_{\beta\in\Xi} \norm{g_{\beta}}_{L^{2}}^{2} \bigr)^{1/2}
\leq
\abs{\Xi}^{1/2} A_{0},
\]
where we used \eqref{eq:alm-orth-hypothesis} with $(c_{\beta})$ being indicator functions of points.
Hence, we obtain
\[
\norm{ \norm{ \sum_{\beta\in \Xi} g_{\beta}(y) c_{t,\beta} }_{V^{q}_{t}} }_{L^{2}_{y}(Y)}
\lesssim
A_{0} \int_{0}^{\infty} \min(J_{\lambda}^{1/2}, \abs{\Xi}^{1/2} J_{\lambda}^{1/q}) \dif\lambda.
\]
Using \eqref{eq:jump-counting-fact} with $a:=\norm{ c_{t,\beta}}_{V^{r}_{t}(\ell^{2}_{\beta})}$ and splitting the integral at $\lambda_{0}=a \abs{\Xi}^{-1/(2r(1/2-1/q))}$, we obtain
\begin{multline*}
\int_{0}^{\lambda_{0}} \abs{\Xi}^{1/2} (a^{r}/\lambda^{r})^{1/q} \dif\lambda
+
\int_{\lambda_{0}}^{\infty} (a^{r}/\lambda^{r})^{1/2} \dif\lambda\\
=
\abs{\Xi}^{1/2}a^{r/q} (-r/q+1)^{-1} \lambda_{0}^{-r/q+1}
-
a^{r/2} (-r/2+1)^{-1} \lambda_{0}^{-r/2+1}\\
=
a \abs{\Xi}^{\frac12 (\frac1{2} - \frac1r)/(\frac1{2} - \frac1q)} ((1-r/q)^{-1} + (r/2-1)^{-1}).
\qedhere
\end{multline*}
\end{proof}

\begin{proof}[Proof of Theorem~\ref{thm:var-coeff-multi-freq}]
From the hypothesis \eqref{eq:Vr-sigma-Znzz} and Minkowski's inequality, it follows that
\begin{equation}\label{eq:cheap-vv-var-est-22zz}
\norm{ \norm{ T_{t} f_{\beta}(x) }_{V^{r}_{t}(\ell^{2}_{\beta\in \Xi})} }_{\ell^{2}_{x}(\Z^{n})}
\le C_1
\bigl( \frac{r}{r-2} \bigr)^{\eta} \norm{ \norm{ f_{\beta}}_{\ell^{2}(\Z^{n})} }_{\ell^{2}_{\beta\in \Xi}},
\end{equation}
initially for $r\in (2,3)$, but by monotonicity of the variation norms also for $r\in (2,\infty)$.

We use the Fourier uncertainty principle.
Let $R_{y}f(x) = f(x-y)$.
By the frequency support assumption on $f_{\beta}$, we have
\[
f_{\beta} = f_{\beta} * (\phi \tilde\phi),
\]
where $\tilde\phi$ is an $\ell^{\infty}$ normalized bump function with $\supp \widehat{\tilde\phi} \subseteq 4U$ such that $\widehat{\phi} * \widehat{\tilde\phi} \equiv 1$ on $U$.
It follows that
\begin{align*}
LHS\eqref{eq:multi-freq-conclusion}
&=
\norm{ \norm{ \sum_{\beta} g_{\beta}(x) T_{t}f_{\beta}(x) }_{V^{q}_{t}} }_{\ell^{2}_{x}}
\\ &=
\norm{ \norm{ \sum_{y\in \Z^{n}} (\phi \tilde\phi)(y) \sum_{\beta} g_{\beta}(x) (T_{t} R_{y} f_{\beta})(x)}_{V^{q}_{t}} }_{\ell^{2}_{x}}
\\ &\leq
\norm[\Big]{ \norm[\big]{ \tilde\phi(y)\cdot \norm{ \sum_{\beta\in \Xi} \phi(y) g_{\beta}(x) (T_{t} R_{y} f_{\beta})(x) }_{V^{q}_{t}} }_{\ell^{1}_{y}} }_{\ell^{2}_{x}}
\\ &\leq
\norm{\tilde{\phi}}_{\ell^{2}}
\norm{ \norm{ \norm{ \sum_{\beta\in \Xi} \phi(y) g_{\beta}(x) (T_{t}R_{y} f_{\beta})(x) }_{V^{q}_{t}} }_{\ell^{2}_{y}} }_{\ell^{2}_{x}}
\\ &\sim
\abs{U}^{-1/2}
\norm{ \norm{ \norm{ \sum_{\beta\in \Xi} \phi(y) g_{\beta}(x) (T_{t} f_{\beta})(x-y) }_{V^{q}_{t}} }_{\ell^{2}_{y}} }_{\ell^{2}_{x}}
\\ &=
\abs{U}^{-1/2}
\norm{ \norm{ \norm{ \sum_{\beta\in \Xi} \phi(y) g_{\beta}(x+y) (T_{t} f_{\beta})(x) }_{V^{q}_{t}} }_{\ell^{2}_{y}} }_{\ell^{2}_{x}}.
\end{align*}
For each fixed $x$, we will apply Lemma~\ref{lem:pull-Vr-into-L2} with the functions
\[
\tilde{g}_{\beta}(y) = \phi(y) g_{\beta}(x+y).
\]
By the hypothesis \eqref{eq:3zz}, the estimate \eqref{eq:alm-orth-hypothesis} holds with
\[
A_{0}
\leq
A_{1} \abs{U}^{1/2}.
\]
By Lemma~\ref{lem:pull-Vr-into-L2}, for any $2<r<q$, we obtain
\[
LHS\eqref{eq:multi-freq-conclusion}
\lesssim A_{1}
\left( \frac{q}{q-r} + \frac{2}{r-2} \right) \abs{\Xi}^{(\frac12 - \frac1r) \frac{q}{q-2}}
\norm{ \norm{ (T_{t}f_{\beta})(x) }_{V^{r}_{t}(\ell^{2}_{\beta\in \Xi})} }_{\ell^{2}_{x}}.
\]
By \eqref{eq:cheap-vv-var-est-22zz}, we obtain
\[
LHS\eqref{eq:multi-freq-conclusion}
\lesssim C_1 A_{1}
\left( \frac{q}{q-r} + \frac{2}{r-2} \right) \abs{\Xi}^{(\frac12 - \frac1r) \frac{q}{q-2}} \bigl( \frac{r}{r-2} \bigr)^{\eta}
\norm{ \norm{ f_{\beta} }_{\ell^{2}} }_{\ell^{2}_{\beta\in \Xi}}.
\]
Choosing $r$ such that $r-2 = (q-2)(\log \abs{\Xi} + 1)^{-1}$, this implies \eqref{eq:multi-freq-conclusion}.
\end{proof}

\section{An application}
\label{sec:application}

For a function $f: \Z^n\to \C$, consider the operator
\begin{equation}
\calC f(x) = \sup_{\lambda\in\R} \abs[\Big]{\sum_{y\in\Z^n\setminus\Set{0}} f(x-y) e(\lambda \abs{y}^{2d}) K(y)},\quad (x\in\Z^n),
\end{equation}
where $K$ is a Calderon-Zygmund kernel that satisfies the conditions as in \cite{arXiv:1907.00405} and $e(\lambda) = e^{2\pi i \lambda}$.
For instance, one can take $K$ to be the Riesz kernel.

Here we use Theorem~\ref{thm:var-coeff-multi-freq} to estimate the major arc operators arising in the proof of the $\ell^2$ bounds of $\calC$ in \cite{arXiv:1907.00405}.
We begin by recalling the approach, notation, and some results from \cite{arXiv:1907.00405}.

First, we apply a dyadic decomposition to $K$ and write
\begin{equation}
K=\sum_{j\ge 1} K_j,
\end{equation}
where $K_j:=K\cdot \psi_j$ and $\psi_j(\cdot )=\psi(2^{-j}\cdot)$ for some appropriately chosen non-negative smooth bump function $\psi$ which is compactly supported. Define the multiplier
\begin{equation}
m_{j, \lambda}(\xi):=\sum_{y\in \Z^n} e(\lambda \abs{y}^{2d}+\xi \cdot y)K_j(y).
\end{equation}
For a multiplier $m(\xi)$ defined on $\T^n$, we define
\begin{equation}
m(D) f(x):=
\int_{\T^n} m(\xi) \widehat{f}(\xi) e(x\cdot \xi) \dif\xi,
\quad x\in \Z^n.
\end{equation}
We also define the continuous version of the multiplier $m_{j, \lambda}$ by
\begin{equation}
\Phi_{j, \lambda}(\xi)
=
\int_{\R^n} e(\lambda \abs{y}^{2d}+\xi \cdot y)K_j(y) \dif y.
\end{equation}
The goal is to prove
\begin{equation}
\norm[\Big]{\sup_{\lambda\in \R} \abs{\sum_{j\ge 1} m_{j, \lambda}(D)f }}_{\ell^2}
\lesim
\norm{f}_{\ell^2}.
\end{equation}
Define the \emph{major arcs} (in the variable $\lambda$)
\begin{equation}
X_j=\bigcup_{\substack{a/q\in \Q, (a, q)=1\\ 1\le q\le 2^{\epsilon_1 j}}} \Set{\lambda\in \R: \abs{\lambda-a/q}\le 2^{-2d j+\epsilon_1 j}},
\end{equation}
where $\epsilon_1>0$ is a small fixed number that depends only on $d$.
The complement $\R\setminus X_j$ will be called a minor arc.

The contribution of the minor arcs was estimated in \cite{arxiv:1803.09431,arXiv:1907.00405} using a $TT^*$ argument in the spirit of \cite{MR1879821}, the result being that there exits $\gamma>0$ such that
\begin{equation}
\norm{ \sup_{\lambda\notin X_j} \abs{m_{j, \lambda}(D)f} }_{\ell^2(\Z)}
\lesim 2^{-j \gamma} \norm{f}_{\ell^2},
\end{equation}
holds for every $j\ge 1$.

On the major arcs in the variable $\lambda$, we have a good approximation of the discrete multiplier $m_{j, \lambda}$ by the continuous multiplier $\Phi_{j,\lambda}$.
For convenience, define
\begin{equation}
\Phi^*_{j, \lambda'}
=
\Phi_{j, \lambda'}\cdot \one_{\abs{\lambda'}\le 2^{-2dj+\epsilon_1 j}}.
\end{equation}
For an integer $1\le s\le \epsilon_1 j$, define
\begin{equation}
\mc{R}_s=\Set{ (a/q, {\bf b}/q)\in \Q\times \Q^n \given (a, {\bf b}, q)=1, q\in \Z\cap [2^{s-1}, 2^s) }.
\end{equation}
For $(\alpha, {\bf \beta})\in \mc{R}_s$, define a complete Gauss sum
\begin{equation}
S(\alpha, \beta)=q^{-n} \sum_{\substack{r=(r_1, \dots, r_n)\\ 0\le r_{1},\dotsc,r_{n}< q}} e(\alpha \abs{r}^{2d}+\beta\cdot r).
\end{equation}
Define $\chi_s(\cdot):=\chi_0(2^{10s}\cdot)$. Define
\begin{equation}
L^s_{j, \lambda}(\xi)
=\sum_{(\alpha, {\bf \beta})\in \mc{R}_s} S(\alpha, {\bf \beta}) \Phi^*_{j, \lambda-\alpha}(\xi-\beta) \chi_s(\xi-\beta).
\end{equation}
Define the error term
\begin{equation}
E_{j, \lambda}(\xi)
:=m_{j, \lambda}(\xi)\cdot \one_{X_j}(\lambda) -\Big(\sum_{1\le s\le \epsilon_1 j}L^s_{j, \lambda}(\xi) \Big).
\end{equation}
By a Sobolev embedding argument in the spirit of Krause and Lacey \cite{MR3658135} applied to the sup over $\lambda$, it was proved in \cite[Proposition 3.2]{arXiv:1907.00405} that there exists $\gamma>0$ such that
\begin{equation}
\norm{\sup_{\lambda\in X_j} \abs{E_{j, \lambda}(D)f}}_{\ell^2} \lesim 2^{-\gamma j}\norm{f}_{\ell^2}.
\end{equation}

It remains to bound the contribution from the multiplier
\begin{equation}
\sum_{j\ge 1} \sum_{1\le s\le \epsilon_1 j}L^s_{j, \lambda}(\xi)=\sum_{s\ge 1}\sum_{j\ge \epsilon_1^{-1}s} L^s_{j, \lambda}(\xi).
\end{equation}
To simplify notation, we introduce
\begin{equation}
L^s_{\lambda}=\sum_{j\ge \epsilon_1^{-1}s} L^s_{j, \lambda} \text{ and } \Phi^s_{\lambda}(\xi)=\sum_{j\ge \epsilon_1^{-1}s} \Phi^*_{j, \lambda}(\xi)\chi_s(\xi).
\end{equation}
By the triangle inequality applied to the sum over $s\ge 1$, it suffices to prove that there exists $\gamma>0$ such that
\begin{equation}\label{main_estimate_317}
\norm{\sup_{\lambda\in \R} \abs{L^s_{\lambda}(D)f}}_{\ell^2}
\lesim
2^{-\gamma s}\norm{f}_{\ell^2},
\end{equation}
for every $s\ge 1$.
This estimate is where our variable coefficient multi-frequency lemma, Theorem~\ref{thm:var-coeff-multi-freq}, will be useful.
The next two lemmas verify its assumptions \eqref{eq:3zz} and \eqref{eq:Vr-sigma-Znzz}, respectively.
Let
\begin{equation}
\begin{split}
& \mc{A}_s=\Set{\alpha\in \Q: (\alpha, \beta)\in \mc{R}_s \text{ for some } \beta},\\
& \mc{B}_s(\alpha)=\Set{\beta\in \Q^n: (\alpha, \beta)\in \mc{R}_s}.
\end{split}
\end{equation}
Moreover, define
\begin{equation}
L^{s, 2}_{\alpha}(\xi):=\sum_{\beta\in \mc{B}_s(\alpha)}S(\alpha, \beta) \chi_s(\xi-\beta).
\end{equation}
We have
\begin{lem}[{\cite[Proposition 3.3]{arXiv:1907.00405}}]\label{prop:weylsums}
There exists $\gamma>0$ depending on $d$ and $n$ such that
\begin{equation}\label{eqn:weylsums-main}
\norm{\sup_{\alpha\in \mc{A}_s} \abs{L^{s, 2}_{\alpha}(D)f}}_{\ell^2} \lesim 2^{-\gamma s}\norm{f}_{\ell^2},
\end{equation}
for every $s\ge 1$.
\end{lem}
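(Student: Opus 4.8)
The plan is to split \eqref{eqn:weylsums-main} into an elementary single-frequency bound and a genuine maximal estimate in the parameter $\alpha$, the latter carrying all the content. The key structural fact is frequency-disjointness: $\chi_s(\cdot-\beta)$ is supported in a ball of radius $O(2^{-10s})$ about $\beta$, while distinct rationals with denominators in $[2^{s-1},2^s)$ are $\ge 2^{-2s}$-separated, and since $q\lesssim 2^s\ll 2^{10s}$ all the bumps occurring in the various $L^{s,2}_\alpha$ have pairwise disjoint supports. Consequently the operators $(\chi_s(\cdot-\beta))(D)$ are mutually orthogonal on $\ell^2(\Z^n)$, so that $\sum_\beta\norm{(\chi_s(\cdot-\beta))(D)f}_{\ell^2}^2\lesssim\norm{f}_{\ell^2}^2$, and for each fixed $\alpha$,
\[
\norm{L^{s,2}_\alpha(D)f}_{\ell^2}^2
=\sum_{\beta\in\mc{B}_s(\alpha)}\abs{S(\alpha,\beta)}^2\,\norm{(\chi_s(\cdot-\beta))(D)f}_{\ell^2}^2
\lesssim\Bigl(\sup_\beta\abs{S(\alpha,\beta)}\Bigr)^2\norm{f}_{\ell^2}^2 .
\]
Invoking the standard Weyl-type bound $\abs{S(\alpha,\beta)}\lesssim_{d,n}q^{-\delta}$ for the complete exponential sum --- valid, with some $\delta=\delta(d,n)>0$, because of the coprimality built into $\mc{R}_s$ --- this already gives $\sup_\alpha\norm{L^{s,2}_\alpha(D)f}_{\ell^2}\lesssim 2^{-\delta s}\norm{f}_{\ell^2}$. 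But \eqref{eqn:weylsums-main} demands the supremum \emph{inside} the norm, and here the crude pointwise bound $\sup_\alpha\abs{L^{s,2}_\alpha(D)f}\le(\sup_{\alpha,\beta}\abs{S})\sum_\beta\abs{(\chi_s(\cdot-\beta))(D)f}$ loses a factor $\#\mc{B}_s(\alpha)^{1/2}\sim q^{n/2}$ that cancels (or overwhelms) the Gauss-sum gain, so this step is a genuine maximal estimate.

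To attack the maximal estimate I would pass to the physical side. Writing out the Gauss sum in $S(\alpha,\beta)$ and summing the multiplier over the residues $\beta=\mathbf b/q$, $\mathbf b\in(\Z/q)^n$, gives the circle-method identity that (with the $\mathbf b$-sum completed to all of $(\Z/q)^n$) $L^{s,2}_\alpha(D)$ is convolution with the kernel $y\mapsto e(a\abs{y}^{2d}/q)\,\check\chi_s(y)$, where $\alpha=a/q$ and $\check\chi_s$ is an $\ell^1$-normalized bump at spatial scale $2^{10s}$; reinstating the coprimality condition changes this by lower-order Möbius-type corrections of the same shape. In this form \eqref{eqn:weylsums-main} is a maximal estimate, over the rationals $a/q$ with $q\sim 2^s$, for single-scale Stein--Wainger averages with the very smooth kernel $\check\chi_s$ --- i.e.\ essentially the major-arc skeleton of a single rational-frequency, single-scale piece of the operator $\calC$ itself.

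Finally I would run the Sobolev-embedding/transference scheme that the paper already uses for $E_{j,\lambda}$: transfer to a continuous maximal operator on $L^2(\R^n)$, rescale by $2^{10s}$, and dominate $\sup_\alpha\abs{L^{s,2}_\alpha(D)f}$ by a geometric mean of the $L^2_\alpha$-average of $L^{s,2}_\alpha(D)f$ --- which is of size $2^{-\gamma s}\norm{f}_{\ell^2}$ by a lattice-point count (the number of pairs of equal-norm integer points in a box of sidelength $\sim q$, which is precisely the circle-method cancellation) --- and its $\partial_\alpha$-derivative. I expect this last step to be the main obstacle: the phase $e(a\abs{y}^{2d}/q)$ oscillates very fast in $a$ compared with the spacing $1/q$ of the admissible rationals, so a naive Sobolev embedding aliases and one must first decompose the kernel dyadically in $\abs{y}$ and treat the pieces separately; controlling these is what forces the cutoff to be taken at the conservative scale $2^{-10s}$ (the exponent $10$ in $\chi_s=\chi_0(2^{10s}\cdot)$), and it is where essentially all of the work goes.
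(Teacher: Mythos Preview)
The paper does not prove this lemma; it is quoted as \cite[Proposition 3.3]{arXiv:1907.00405} and used as a black box, so there is no proof in the present paper to compare your sketch against.

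On the substance: your reductions through the penultimate paragraph are correct --- the frequency-disjointness, the single-$\alpha$ bound, and the physical-side rewriting of $L^{s,2}_{\alpha}(D)$ as convolution with $y\mapsto e(\alpha\abs{y}^{2d})\check\chi_s(y)$ (modulo M\"obius corrections) are all sound, and you have located the obstacle precisely. But the Sobolev-in-$\alpha$ scheme you propose cannot close the gap. Differentiating in $\alpha$ brings down $\abs{y}^{2d}$, which on the essential support of $\check\chi_s$ is of size up to $2^{20ds}$; a dyadic decomposition in $\abs{y}$ yields pieces at scales $2^k$ for every $k\le 10s$, but $\check\chi_s$ is essentially constant over that entire range and supplies no compensating smallness, while your $L^2_\alpha$-average saves at most a factor $\sim 2^{-s/2}$. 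Hence the geometric-mean bound loses a large power of $2^s$ on every non-trivial piece. The failure is structural: the admissible $\alpha$'s are only $\sim 2^{-2s}$-separated while the phase has $\alpha$-frequency $\sim 2^{20ds}$, so any interpolation in $\alpha$ aliases hopelessly; note that placing $\chi_s$ at the conservative scale $2^{-10s}$ enlarges the spatial support of $\check\chi_s$ and therefore makes this \emph{worse}, contrary to your last sentence. The argument in the cited reference does not pass through smoothness in $\alpha$ at all; it uses number-theoretic input on the Gauss sums $S(\alpha,\beta)$ (Weyl/Weil bounds and multiplicativity in the denominator) together with orthogonality in $\beta$ --- closer in spirit to the lattice-point heuristic you mention than to the Sobolev scheme you conclude with.
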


\begin{lem}\label{variation_lemma_GRY}
For every $r\in (2, 3)$, we have
\begin{equation}
\norm{ \norm{ \Phi^s_{\lambda}(D)f }_{V^{r}_{\lambda\in (0, 1]}} }_{\ell^2}
\lesim_{d, n}
(r-2)^{-1} \norm{f}_{\ell^2}.
\end{equation}
\end{lem}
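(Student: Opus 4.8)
The plan is to transfer the estimate to $L^{2}(\R^{n})$ and there invoke a sharp variation-norm bound for a truncated oscillatory singular integral of Stein--Wainger type. For the transference, note that the multiplier $\Phi^{s}_{\lambda}$ carries the factor $\chi_{s}(\xi)=\chi_{0}(2^{10s}\xi)$, which is supported in a $2^{-10s}$-neighbourhood of the origin, so that $\Phi^{s}_{\lambda}$, read as a function on $\R^{n}$, is supported in a single fundamental domain of $\T^{n}$. Hence $\lambda\mapsto\Phi^{s}_{\lambda}$ defines a Fourier multiplier with values in the bounded operators $L^{2}\to L^{2}(V^{r}_{\lambda})$, and by the sampling principle of Magyar, Stein and Wainger \cite{MR1879821}, which is insensitive to the target Banach space, the $\ell^{2}(\Z^{n})\to\ell^{2}_{x}(\Z^{n};V^{r}_{\lambda\in(0,1]})$ operator norm of $\Phi^{s}_{\lambda}(D)$ is controlled by the $L^{2}(\R^{n})\to L^{2}_{x}(\R^{n};V^{r}_{\lambda\in(0,1]})$ operator norm of the operator with the same symbol on $\R^{n}$. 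Since $\chi_{s}(D)$ is convolution with an $L^{1}$-normalised bump (uniformly in $s$) that does not depend on $\lambda$, it may be discarded, and it suffices to prove
\[
\Bigl\| \, \Bigl\| \sum_{j\ge \epsilon_{1}^{-1}s} \Phi^{*}_{j,\lambda}(D)F \Bigr\|_{V^{r}_{\lambda\in(0,1]}} \Bigr\|_{L^{2}(\R^{n})}
\lesssim_{d,n} (r-2)^{-1}\,\norm{F}_{L^{2}(\R^{n})}.
\]

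To identify this operator, note that for fixed $\lambda\in(0,1]$ the truncation reads $\one_{\abs{\lambda}\le 2^{-2dj+\epsilon_{1}j}}=\one_{j\le g(\lambda)}$ with $g(\lambda)=\log(1/\lambda)/((2d-\epsilon_{1})\log 2)$, and since $\Phi_{j,\lambda}(D)$ is convolution with $e(\lambda\abs{y}^{2d})K_{j}(y)$,
\[
\sum_{j\ge \epsilon_{1}^{-1}s}\Phi^{*}_{j,\lambda}(D)F(x)
=\int_{\R^{n}} F(x-y)\,e\bigl(\lambda\abs{y}^{2d}\bigr)\, \Bigl(\sum_{\epsilon_{1}^{-1}s\le j\le g(\lambda)}K_{j}(y)\Bigr)\,\dif y,
\]
which is the oscillatory singular integral with phase $\lambda\abs{y}^{2d}$ and kernel $K$ smoothly truncated to $2^{\epsilon_{1}^{-1}s}\lesssim\abs{y}\lesssim\lambda^{-1/(2d-\epsilon_{1})}$. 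A sharp variation-norm estimate for operators of this type --- with the displayed $(r-2)^{-1}$ dependence and uniform in the two truncation scales --- is a continuous Stein--Wainger-type estimate of the kind established by Guo, Roos and Yung, and is already present in essence in the analysis of \cite{arxiv:1803.09431,arXiv:1907.00405}; the power $1$ of $(r-2)^{-1}$ is the standard loss one incurs when the $\ell^{r}$ jump-counting inequality, which diverges logarithmically at $r=2$, is summed as a geometric series for $r>2$.

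The one feature of the operator above that is not covered verbatim by the usual statement (which truncates at the critical scale $\lambda^{-1/(2d)}$, or takes a supremum over truncations) is the range of scales $\lambda^{-1/(2d)}\lesssim\abs{y}\lesssim\lambda^{-1/(2d-\epsilon_{1})}$, on which the phase $\lambda\abs{y}^{2d}$ is slightly supercritical, of size between $1$ and $2^{\epsilon_{1}j}$ on $\abs{y}\sim 2^{j}$, and which must be added back in. For this I would use the van der Corput estimates $\norm{\Phi_{j,\lambda}}_{\infty}\lesssim(\lambda 2^{2dj})^{-\delta}$ and $\norm{\partial_{\lambda}\Phi_{j,\lambda}}_{\infty}\lesssim 2^{2dj}(\lambda 2^{2dj})^{-\delta}$ for some $\delta=\delta(d,n)>0$ (these are already needed in \cite{arxiv:1803.09431,arXiv:1907.00405}), together with the fact that $\Phi_{j,\lambda}$ is, modulo rapidly decaying tails, frequency-localised to $\abs{\xi}\sim\lambda 2^{(2d-1)j}$. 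One estimates the variation contributed by a single such scale $j$ by interpolating the $V^{1}$ bound obtained from the fundamental theorem of calculus in $\lambda$ against the $V^{\infty}$ bound $\sup_{\lambda}\norm{\Phi_{j,\lambda}}_{\infty}$, which produces a negative power of $2^{\epsilon_{1}j}$, and then sums the scales in $\ell^{2}$ using that their frequency bands are lacunary-separated. The main obstacle is to make this $\ell^{2}$ almost-orthogonality across scales coexist with the variation norm in $\lambda$ without degrading the $(r-2)^{-1}$ dependence; this is precisely the ``$\ell^{2}$-decoupling of scales inside a variation norm'' that underlies the continuous estimate quoted above, now carried out in the narrow band just above the critical scale.
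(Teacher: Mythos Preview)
Your approach matches the paper's: reduce to $L^{2}(\R^{n})$ via the Magyar--Stein--Wainger transference principle (note: this is \cite[Proposition~2.1]{MR1888798}, not \cite{MR1879821}, which the paper invokes only for the minor-arc $TT^{*}$ argument), and then appeal to the continuous variation-norm estimate of Guo--Roos--Yung \cite{arXiv:1710.10988} with the minor modifications recorded in \cite[Section~7]{arXiv:1907.00405}. Your sketch of how to handle the slightly supercritical range $\lambda^{-1/(2d)}\lesssim\abs{y}\lesssim\lambda^{-1/(2d-\epsilon_{1})}$ is a reasonable elaboration of what those modifications amount to, though the paper simply cites the result rather than reproving it.
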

\begin{proof}[Proof of Lemma~\ref{variation_lemma_GRY}.]
By the transference principle of Magyar, Stein, and Wainger in \cite[Proposition 2.1]{MR1888798}, it suffices to prove that
\begin{equation}
\norm{ \norm{ \Phi^s_{\lambda}(D)f }_{V^{r}_{\lambda\in (0, 1]}}}_{L^2(\R^n)}
\lesim_{d, n}
(r-2)^{-1} \norm{f}_{L^2(\R^n)},
\end{equation}
with constants independent of $s$. This was essentially established in Guo, Roos and Yung \cite{arXiv:1710.10988}, with minor changes detailed in Roos \cite[Section 7]{arXiv:1907.00405}.
\end{proof}

Now we are ready to prove \eqref{main_estimate_317}. We linearize the supremum and aim to prove
\begin{equation}\label{ee200603e1.6}
\norm[\big]{L^s_{\lambda(x)}(D)f(x)}_{\ell^2_x} \lesim 2^{-\gamma s} \norm{f}_{\ell^2},
\end{equation}
where $\lambda: \Z^n\to (0, 1]$ is an arbitrary function.
For each $x\in\Z^n$, $\alpha(x)$ is defined as the unique $\alpha\in\calA_s$ such that $\abs{\lambda(x)-\alpha}\le 2^{-3s}$ (say), or as an arbitrary value from the complement of $\calA_s$ if no such $\alpha$ exists (in this case, $L^s_{\lambda(x)}(\xi)=0$).
By definition, the term we need to bound in \eqref{ee200603e1.6} can be written as
\begin{equation}\label{ee200603e1.8}
\begin{split}
&\sum_{\beta\in \mc{B}_s(\alpha(x))} \int S(\alpha(x), \beta) \Phi^s_{\lambda(x)-\alpha(x)}(\xi-\beta) \hat{F}_{\beta}(\xi) e(\xi x) d\xi,
\end{split}
\end{equation}
where
\begin{eqnarray}
\hat{F}_{\beta}(\xi)=\hat{f}(\xi) \wt{\chi}_s(\xi-\beta),
\end{eqnarray}
and $\wt{\chi}_s(\cdot)=\wt{\chi}_0(2^{10s}\cdot)$ for some appropriately chosen compactly supported smooth bump function $\wt{\chi}_0$ with $\wt{\chi}_{0}\chi_{0}=\chi_{0}$.
We apply Theorem~\ref{thm:var-coeff-multi-freq} with
\begin{equation}
\Xi=\Set{{\bf b}/q: {\bf b}\in \Z^n, q\in \Z\cap [2^{s-1}, 2^s)},
\end{equation}
$t=\lambda$, $\phi=\mc{F}^{-1}_{\Z^n}(\chi_s)$, $U$ the support of $\chi_s$, and $T_t=\Phi^s_{t}(D)=\Phi^s_{\lambda}(D)$,
and any fixed $q$, say, $q=3$.
The hypothesis \eqref{eq:Vr-sigma-Znzz} with $\eta=1$ is then given by Lemma~\ref{variation_lemma_GRY}.
In \eqref{eq:multi-freq-conclusion}, we take
\begin{equation}
f_{\beta}(y)=F_{\beta}(y) e(-\beta y),
\end{equation}
and
\begin{equation}\label{20200628e3_20}
g_{\beta}(x)=\one_{\beta\in \mc{B}_s(\alpha(x))}\cdot S(\alpha(x), \beta) e(\beta x).
\end{equation}
Since $\Phi_{1}^s=0$ for all $s$, the $V^{q}$ norm on the left-hand side of \eqref{eq:multi-freq-conclusion} controls the supremum over $\lambda$.
We apply Theorem~\ref{thm:var-coeff-multi-freq} and bound term \eqref{ee200603e1.8} by
\begin{equation}
s^2 A_1 \norm{f}_{L^2},
\end{equation}
where $A_1$ is the constant in \eqref{eq:3zz} under the above choice of $g_{\beta}$. It remains to prove that
\begin{equation}\label{200603e1.14}
A_1 \lesim 2^{-\gamma s} \text{ for some } \gamma>0.
\end{equation}
To do so, we will apply Lemma~\ref{prop:weylsums}.

Regarding the left hand side of \eqref{eq:3zz}, we apply a change of variable and write it as
\begin{equation}
\norm[\Big]{ \sum_{\beta\in\Xi} \phi(y-x) g_{\beta}(y) c_{\beta} }_{\ell^{2}_{y}}
=
\norm[\Big]{\sum_{\beta\in \mc{B}_s(\alpha(y))} \phi(y-x) S(\alpha(y), \beta) e(\beta y) c_{\beta}}_{\ell^2_y}.
\end{equation}
We write a linearization of the left hand side of \eqref{eqn:weylsums-main} as
\begin{equation}
\begin{split}
& \sum_{\beta\in \mc{B}_s(\alpha(y))} \int S(\alpha(y), \beta) \hat{F}_{\beta}(\xi) e(\xi y) d\xi= \sum_{\beta\in \mc{B}_s(\alpha(y))} S(\alpha(y), \beta) F_{\beta}(y),
\end{split}
\end{equation}
where
\begin{equation}\label{200603e1.17}
\hat{F}_{\beta}(\xi)=\chi_s(\xi-\beta) \hat{f}(\xi).
\end{equation}
In the end, we just need to pick
\begin{equation}
\hat{f}(\xi)=
\sum_{\beta\in \mc{B}_s^{\sharp}}c_{\beta}\cdot \tilde{\chi}_s(\xi-\beta) e(x(\beta-\xi)),
\end{equation}
The desired estimate \eqref{200603e1.14} follows as
\begin{equation}
\norm{f}_{\ell^2} \sim \abs{U}^{1/2} \norm{c_{\beta}}_{\ell^2_{\beta}}.
\end{equation}
This finishes the proof of \eqref{200603e1.14}, thus the proof of the desired estimate \eqref{ee200603e1.6}.

\printbibliography
\end{document}